\newtheorem{thm}{Theorem}
\newtheorem{lemma}[thm]{Lemma}
\newtheorem{claim}[thm]{Claim}
\newtheorem{subclaim}[thm]{Subclaim}
\DeclareSymbolFont{AMSb}{U}{msb}{m}{n}
\DeclareMathSymbol{\R}{\mathbin}{AMSb}{"52}
\newcommand{\beq}{\begin{equation}}
\newcommand{\eeq}{\end{equation}}
\newcommand{\beqa}{\begin{eqnarray}}
\newcommand{\eeqa}{\end{eqnarray}}
\author{Michael Bateman}
\title{ $L^p$ Estimates for Maximal Averages Along One-variable Vector Fields in ${\mathbf R} ^2$
\thanks{This work was supported in part under NSF Grant DMS0653763.
2000 Mathematics
Subject Classification:  Primary 42B25. }}
\author{Michael Bateman
\thanks{Department of Mathematics,
Indiana University,
Rawles Hall,
   831 East 3rd St,
   Bloomington, IN 47405({\tt mdbatema@indiana.edu.})}  }
\date{ }
\begin{document}

\maketitle


\begin{abstract}
We prove a conjecture of Lacey and Li in the case that the vector field depends only on one variable.  Specifically: 
let $v$ be a vector field defined on the unit square such that $v(x,y) = (1,u(x))$ for some measurable 
$u: [0,1] \rightarrow [0,1]$.  Let $\delta$ be a small parameter, and let $\mathcal R$ be the collection of rectangles $R$ of a fixed width such that $\delta$ much of the vector field inside $R$ is pointed in (approximately) the same direction as $R$.  We show that the operator defined by
\beqa
M_{\mathcal R} f (z ) =   \sup _{z\in R \in \mathcal R}    {1 \over { |R| } } \int _{R} |f| 
\eeqa
is bounded on $L^p$ for $p>1$ with constants comparable to ${1 \over {\delta} }$.
\end{abstract}


\section{Introduction}
In the paper [LL1], Lacey and Li reduce the problem of bounding in $L^2$ the Hilbert transform along a $C^{1+\varepsilon }$ vector field to  
estimating the $L^p$ norm of a related maximal function for some $p<2$.  They have established these maximal function bounds when 
$p=2$ and conjecture that they hold for $p>1$.  Here we prove the conjecture for vector fields of one variable.  More 
precise statements follow.

Let $v$ be a vector field on ${\mathbf R} ^2$.  We will assume $v \colon [0,1]\times [0,1] \rightarrow [0,1]$, i.e., we work only in a bounded region, and we assume all vectors are of the form $v(x,y) = (1 , u(x,y) )$.  
To define the maximal operator in question we need to introduce some notation.  For a rectangle $R$, we write $L(R)$ for its length, and $W(R)$ for its width.  Let $slope(R)$ be the slope of the long side of $R$.  (We will assume $L(R) \geq W(R)$.)  We define its interval of uncertainty $EX(R)$ to be the interval of width ${{ W(R) }\over { L(R)} }$  centered at $slope(R)$.  Let 
\beqa
V(R) = \{ (x,y) \in R \colon u(x,y) \in EX(R)\}.
\eeqa
Fix $0<\delta \leq 1$, $0<w<<1$, and let 
\beqa
\widetilde{\mathcal R} _{\delta , w, v} = \{R \colon L(R) \leq {{1} \over {100||v||_{lip} } }, W(R) = w, \quad \text{and} \quad 
|V(R)| \geq \delta |R|  \},
\eeqa
where 
$||v||_{lip}$ is the Lipschitz constant of the vector field $v$, and where $|\centerdot |$ indicates the Euclidean measure of a set.  In words:  $\tilde{\mathcal R} _{\delta , w, v}$ is the collection of rectangles $R$ such that $\delta$ much of the vector field in $R$ is pointed in (almost) the same direction as $R$.  

We will consider several similar maximal operators in this paper.  If $\mathcal R$ is a collection of rectangles, define 
\beqa
M_{\mathcal R} f (z) = \sup _{z\in R \in \mathcal R} {1 \over { |R| } } \int _{R} |f| .
\eeqa


Motivation for studying this operator comes from work of Lacey and Li [LL1], in which they prove Theorem \ref{hilbert}.
Define, for a sufficiently small value of $\beta$, the truncated integral operator
\beqa H_{v, \beta} f (z) = p.v. \int _{-\beta } ^ {\beta} { { f(z+ tv(z) ) }  \over { t} } dt. \eeqa
\begin{thm}[(Lacey-Li)] \label{hilbert}
Suppose there is a $p<2$ and an $N$ such that for any Lipschitz vector field $v$,
\beqa
|| M_{ \tilde{\mathcal R} _{\delta , w, v} }   || _{L^p \rightarrow L^p }\lesssim {1\over {\delta ^N} }
\eeqa
for any $0<w<<{{1} \over {100||v||_{lip} } }$.  Then if $v$ is a  $C^{1+\varepsilon }$ vector field, 
\beqa
||H_{v,\beta } || _{L^2 \rightarrow L^2 } \lesssim ( 1 + \log ||v|| _{ C^{1+\varepsilon } } ) ^2 .  
\eeqa
\end{thm}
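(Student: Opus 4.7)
The plan is to perform a time-frequency decomposition of $H_{v,\beta}$ adapted to the vector field, in the style of Carleson's theorem and its vector-field generalizations. First, decompose the truncated kernel $1/t$ into dyadic frequency pieces, so that $H_{v,\beta}=\sum_k H_k$ with $H_k$ concentrated at frequency scale $2^{k}$, and further slice the frequency plane into thin rectangles of eccentricity $2^{-j}$ aligned orthogonally to candidate vector-field values. Each piece of the decomposition is naturally indexed by a tile consisting of a spatial rectangle $R$ of width $w \sim 2^{-k}$, length $\sim 2^{j-k}$, and slope $\sim u(x_R)$ at the center $x_R$ of $R$, paired with its dual frequency rectangle.

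The key observation is that a tile contributes non-negligibly to the operator applied to an $L^2$-normalized function only when its spatial rectangle already lies in $\widetilde{\mathcal R}_{\delta,w,v}$ for an appropriate density $\delta$: being selected by the stopping-time procedure forces a $\delta$-fraction of the vector field inside $R$ to fall in the frequency window of the tile, which coincides (up to a $C^{1+\varepsilon}$ error in linearizing $u$) with the interval of uncertainty $EX(R)$. Following the Lacey--Li scheme, I would then organize the selected tiles into trees (coherent sets focused at one tile-top) and forests (unions of trees with essentially disjoint tops), handle a single tree by a Bessel-type orthogonality estimate, and estimate the $L^2$ contribution of a forest of density $\delta$ by observing that high-multiplicity overlap of tree-top rectangles at a point $z$ forces $M_{\widetilde{\mathcal R}_{\delta,w,v}} \mathbf 1_E(z) \gtrsim 1$, where $E$ is the union of these rectangles. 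The hypothesized $L^p \to L^p$ bound $\lesssim \delta^{-N}$ then bounds the measure of such overlap sets with only polynomial loss in $\delta$, which by Tchebychev converts into a bound on the forest's $L^2$-mass.

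Summing geometrically over dyadic densities $\delta = 2^{-\ell}$ (which converges because having $p<2$ leaves room to interpolate against a trivial $L^\infty$ bound on individual tiles) and then over the $O(\log ||v||_{C^{1+\varepsilon}})$ dyadic frequency and eccentricity scales that are not killed outright by the smoothness assumption yields the claimed $L^2$ estimate for $H_{v,\beta}$. The main obstacle is setting up the tile selection so that the chosen tiles genuinely populate $\widetilde{\mathcal R}_{\delta,w,v}$, which requires quantifying ``$v$ approximately constant on each tile'' via the $C^{1+\varepsilon}$ regularity and matching the resulting error against the frequency-localization scale; a secondary obstacle is combining the tree/forest orthogonality with the Kakeya-type maximal bound through an interpolation that loses only polynomially in $\delta$, so that the geometric sum over $\ell$ actually converges.
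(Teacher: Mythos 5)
This theorem is not proved in the paper at all: it is imported verbatim from Lacey and Li's paper [LL1] and used as a black box (the author explicitly directs the reader to [LL1] ``for the full story''), so there is no in-paper proof to compare your attempt against. Measured instead against the actual Lacey--Li argument, your outline has the right overall shape --- dyadic frequency decomposition of the kernel, tiles pairing spatial parallelograms with dual frequency rectangles, a density parameter $\delta$ sorting tiles by how much of the vector field points into their uncertainty interval, a tree/forest organization with Bessel estimates on single trees, the maximal function hypothesis controlling the overlap of tree tops, and a geometric sum over dyadic densities made convergent by the room between $p$ and $2$. That is a fair description of the architecture of [LL1].

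But as a proof it has genuine gaps, and they are exactly the two ``obstacles'' you name at the end. Showing that the tiles selected at density $2^{-\ell}$ really produce rectangles in $\widetilde{\mathcal R}_{\delta,w,v}$ with $\delta\sim 2^{-\ell}$, and that the linearization error from $C^{1+\varepsilon}$ regularity is compatible with the frequency localization, is not a routine verification --- it is the reason the smoothness hypothesis and the truncation at scale $\beta$ enter, and it is where the restriction to $O(\log\|v\|_{C^{1+\varepsilon}})$ scales (and ultimately the squared logarithm in the conclusion, which your sketch does not account for) comes from. Likewise, the passage from the $L^p\to L^p$ maximal bound with loss $\delta^{-N}$ to a forest estimate with a \emph{net positive} power of $\delta$ --- so that $\sum_{\ell}$ converges --- requires a careful interplay between the size and density parameters of a forest that you assert rather than carry out; it is the technical heart of [LL1] and occupies most of that paper. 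So your proposal is a correct roadmap of someone else's proof, not a proof: every step you would need to justify in detail is deferred. For the purposes of the present paper none of this is needed, since the theorem is quoted, not proved.
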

It is interesting to note that this bound for $H_{v,\beta }$ is strong enough to prove Carleson's theorem on pointwise converge of Fourier series.  The reader is encouraged to consult [LL1] for the full story.
Here we prove that the hypothesis of this theorem is satisfied provided that the vector field $v$ depends only on one variable.  In fact, this additional assumption eliminates the need to assume that $v$ has any smoothness.  So now we define
\beqa
\mathcal R _{\delta , w, v}  = \{ R \colon |V(R)| \geq \delta |R| \quad \text{and} \quad L(R) \leq 1 \}.
\eeqa
\begin{thm} \label{main}
Let $v\colon [0,1] \times [0,1] \rightarrow \{1\} \times [0,1]$ depend only on the first variable, i.e., let $v(x,y) = (1,u(x)) $ for some measurable 
$u : [0,1] \rightarrow [0,1]$.  Then 
\beqa
||M _{\mathcal R _{\delta , w, v} }   || _{L^p \rightarrow L^p }\lesssim {1\over {\delta } },
\eeqa
with constants independent of $w$ and $v$.
\end{thm}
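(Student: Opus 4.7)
The plan is to reduce the proof of Theorem~\ref{main} to a Vitali-type covering lemma with overlap $O(1/\delta)$, exploiting the fact that $u$ depends only on $x$ to turn the density condition $|V(R)|\geq\delta|R|$ into a one-dimensional condition on the $x$-projection of $R$. Because the condition $u(x)\in EX(R)$ involves no $y$-variable, the set $V(R)$ factors (up to constants controlled by the slope of $R$) as $(\pi_x(R)\cap u^{-1}(EX(R)))\times(\text{vertical slice})$. Hence $|V(R)|\geq\delta|R|$ is equivalent, up to absolute constants, to the one-dimensional condition $|\pi_x(R)\cap u^{-1}(EX(R))|\gtrsim\delta L(R)$, and is completely insensitive to the vertical placement of $R$.

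With this reduction in hand, I would perform a standard dyadic discretization (dyadic lengths $L=2^{-k}\leq 1$ and slopes in dyadic intervals of width $\sim w/L$), linearize the maximal operator by selecting for each $z\in[0,1]^2$ a near-maximizing $R(z)\in\mathcal R_{\delta,w,v}$, and aim to prove the following covering statement: for every finite $\mathcal R'\subset\mathcal R_{\delta,w,v}$ there exists $\mathcal R''\subset\mathcal R'$ with $|\bigcup\mathcal R''|\gtrsim|\bigcup\mathcal R'|$ and overlap bound $\sum_{R\in\mathcal R''}\mathbf{1}_R\lesssim (1/\delta)\,\mathbf{1}_{\bigcup\mathcal R''}$. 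From such a lemma, a weak-$(1,1)$ estimate for $M_{\mathcal R_{\delta,w,v}}$ with constant $O(1/\delta)$ follows by the usual reduction, and interpolation with the trivial $L^\infty$ bound yields $\|M_{\mathcal R_{\delta,w,v}}\|_{L^p\to L^p}\lesssim (1/\delta)^{1/p}\leq 1/\delta$ for $p>1$.

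The main obstacle is obtaining the clean $1/\delta$ dependence in the covering lemma, rather than a bound that picks up logarithmic factors in the number of length scales and slope buckets. The one-variable structure must be used essentially here. Each $R\in\mathcal R_{\delta,w,v}$ comes with a witnessing subset of $\pi_x(R)$ of one-dimensional measure at least $c\delta L(R)$ on which $u$ lies in $EX(R)$. If a point $z$ is covered by chosen rectangles whose slopes belong to many distinct dyadic slope buckets, then the corresponding witnessing subsets must be essentially disjoint near the $x$-coordinate of $z$, since they force $u$ to lie in disjoint narrow slope intervals. Thus the pointwise multiplicity $\sum_R\mathbf{1}_R$ can, in principle, be charged via a double-counting argument against the one-dimensional measure of the union of witnessing subsets, which is at most $1$. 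I expect a stopping-time or tree organization of the selected rectangles—grouping them by scale and slope, and iteratively pruning clusters that contribute disproportionately to the overlap—to deliver the $O(1/\delta)$ overlap bound. Carrying out this combinatorial bookkeeping, and in particular handling the interaction across different length scales, will be the technical heart of the argument.
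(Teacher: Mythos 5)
Your overall architecture --- dyadic discretization, reduction of the density condition to a one-dimensional condition on the $x$-projection, and a covering/exhaustion argument in which the overlap of selected rectangles is charged against disjoint slope choices of the function $u$ --- is the same skeleton as the paper's argument. But your key lemma is stated in a form that is too strong, and the gap is exactly at the point you identify as ``the technical heart.'' You ask for a selected subfamily $\mathcal R''$ with \emph{pointwise} overlap $\sum_{R\in\mathcal R''}\chi_R\lesssim\delta^{-1}$, which would give weak $(1,1)$ with constant $O(1/\delta)$. No such bound is available, and the paper does not prove one: it proves only that for each integer $q\ge 2$ the selected family satisfies $\int(\sum_R\chi_R)^q\le c_q\,\delta^{-(q-1)}\sum_R|R|$, which upon dualizing yields weak $(p,p)$ for $p=q'>1$ (with constant blowing up as $p\to 1$), and the theorem follows by running this for every $q$. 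There is no weak $(1,1)$ endpoint to interpolate from.

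The reason your disjointness heuristic does not deliver a pointwise bound is the interaction across scales, which you flag but do not resolve. At a fixed length scale the witnessing sets for distinct slope buckets are indeed disjoint, but a slope bucket at a coarse scale (width $\sim w/L$ with $L$ large) is \emph{nested inside} a bucket at a finer scale, not disjoint from it, so the same set of $x$'s can serve as the witness for one rectangle at every length scale. What the disjointness argument actually produces, after pruning slopes already used by ancestors (the paper's $T(I)$ construction), is a Carleson measure condition $\sum_{J\subseteq I}\mu_J\le|I|$; the associated overlap majorant $h(x)=\sum_I\chi_I(x)\mu_I/|I|$ is a balayage, which lies in dyadic BMO and hence in every $L^r$ by John--Nirenberg, but is \emph{not} bounded: taking one $\delta$-popular slope per scale above a fixed $x$ gives $h(x)\sim\log(1/w)$, so the overlap can reach $\delta^{-1}\log(1/w)$ on a small set. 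This is why the paper's conclusion is an $L^q$ estimate on the counting function for each finite $q$ (with constant $c_q$ from John--Nirenberg) rather than an $L^\infty$ estimate, and why the final result is $L^p$ boundedness for $p>1$ rather than weak $(1,1)$. To repair your argument you would need to replace your covering lemma by this weaker $L^q$ version and replace the weak-$(1,1)$-plus-interpolation step by the duality argument $\sum|R|\le\lambda^{-1}\|f\|_p\|\sum\chi_R\|_q$.
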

In section \ref{reductions}, we reduce the problem to a model with discrete slopes and paralellograms that project vertically to dyadic intervals.  There is essentially nothing new here, and experts may wish to skim for notation.  In section \ref{mainproof}, we prove Theorem \ref{main}.

\subsection{Acknowledgement}
The author thanks Nets Katz and Xiaochun Li for helpful discussions.

\section{Reductions} \label{reductions}
We begin by defining a discrete set of slopes.  Let
\beqa
S_k = \{ {  {j+ {1\over 2} } \over {2^k} } \colon j \in \{ 0,1,..., 2^k -1\}  \}.
\eeqa
Let 
\beqa
\mathcal R _{\delta , w, v} ^k = \{ R \in \mathcal R _{\delta , w, v} \colon 2^{k-1}w < L(R) \leq 2^k w  \text{  and  } slope(R) \in S_k \}.
\eeqa
Note that $R _{\delta , w, v} ^k $ is just a collection of rectangles in $R _{\delta , w, v}$ whose intervals of uncertainty have size about $2^{-k}$, and whose slopes are $2^{-k}$-separated.  Let
\beqa
R _{\delta , w, v} ^{dis} = \bigcup _{k=1} ^ {\log {1\over w} } R _{\delta , w, v} ^k.
\eeqa
Now we show that it is enough to consider averages over rectangles in $R _{\delta , w, v} ^{dis}$.  
\begin{lemma}
For any locally integrable function $f$, 
\beqa
M _{\mathcal R _{\delta , w, v} }f(z) \lesssim M _{\mathcal R^{dis} _{  { { \delta} \over {10} }  , 5w, v}      } f(z).
\eeqa
\end{lemma}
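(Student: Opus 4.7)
The plan is to establish the lemma as a pointwise inequality: for every $R\in \mathcal R_{\delta,w,v}$ with $z\in R$, I will find $R'\in \mathcal R^{dis}_{\delta/10,5w,v}$ with $z\in R'$, $R\subseteq R'$, and $|R'|\leq 10|R|$. Given such an $R'$,
\[
\frac{1}{|R|}\int_R|f|\;\leq\;\frac{|R'|}{|R|}\cdot\frac{1}{|R'|}\int_{R'}|f|\;\leq\;10\,M_{\mathcal R^{dis}_{\delta/10,5w,v}}f(z),
\]
and taking the supremum over $R\ni z$ gives the claim.

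To construct $R'$, let $k$ be the integer with $L(R)\in(2^{k-1}w,2^kw]$, and choose $s'\in S_k$ closest to $slope(R)$, so that $|s'-slope(R)|\leq 2^{-k-1}$ since $S_k$ is $2^{-k}$-spaced. Take $R'$ to be the rectangle of slope $s'$, width $5w$, and a dyadic length $L'$ in the admissible range $(2^{k'-1}\cdot 5w,2^{k'}\cdot 5w]$ of some $\mathcal R^{k'}_{\delta/10,5w,v}$ (with $k'\in\{k-1,k\}$), centered so as to contain $R$. An elementary trigonometric estimate bounds the perpendicular drift of $R$'s corners away from the long axis of $R'$ by $(L(R)/2)\cdot 2^{-k-1}+w/2\leq w/4+w/2<5w/2$, so the thickening of the width from $w$ to $5w$ accommodates $R$; the length $L'$ is chosen analogously to enclose $R$ longitudinally.

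The remaining point is to verify $R'\in\mathcal R^{dis}_{\delta/10,5w,v}$, for which it suffices to show $EX(R)\subseteq EX(R')$, since then $V(R)\subseteq R\cap u^{-1}(EX(R))\subseteq R'\cap u^{-1}(EX(R'))=V(R')$, giving $|V(R')|\geq|V(R)|\geq\delta|R|\geq(\delta/10)|R'|$. Now $EX(R)$ has half-width $w/(2L(R))<2^{-k}$ about $slope(R)$, and $EX(R')$ has half-width $5w/(2L')$ about $s'$, with centers offset by at most $2^{-k-1}$. Choosing $L'$ (equivalently $k'$) so that $5w/(2L')\geq 2^{-k}+2^{-k-1}$ places $EX(R)$ inside $EX(R')$ by the triangle inequality.

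The delicate part I anticipate is the joint calibration of constants: the width factor $5$ must simultaneously absorb both the tilt arising from the slope discretization and the shift in the interval of uncertainty, while keeping $|R'|/|R|$ below $10$ so that the density threshold $\delta/10$ is met. This works precisely because the spacing $2^{-k}$ of $S_k$ matches the width $w/L(R)\sim 2^{-k}$ of $EX(R)$ at scale $k$, so a constant-factor thickening and a one-step shift of the dyadic length index suffice.
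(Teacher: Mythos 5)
Your strategy --- dominating the average over $R$ by the average over a \emph{single} rectangle $R'\in\mathcal R^{dis}_{\delta/10,5w,v}$ with $R\subseteq R'$ and $EX(R)\subseteq EX(R')$ --- does not close, and the $R'$ you construct is not a member of the target class. Membership in $\mathcal R^{k'}_{\delta/10,5w,v}$ couples three things at once: $slope(R')\in S_{k'}$, $L'\in(2^{k'-1}\cdot 5w,\,2^{k'}\cdot 5w]$, and consequently the width of $EX(R')$. You take $s'\in S_k$ but allow $k'\in\{k-1,k\}$. If $k'=k$, then $L'>\tfrac52\cdot 2^kw$, which contradicts your requirement $5w/(2L')\ge 3\cdot 2^{-k-1}$ (that forces $L'\le\tfrac53\cdot 2^kw$). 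If $k'=k-1$, the class requires $slope(R')\in S_{k-1}$; but $S_{k-1}$ and $S_k$ are disjoint sets of points (elements of $S_k$ are odd multiples of $2^{-k-1}$, those of $S_{k-1}$ are even multiples), so a rectangle with slope $s'\in S_k$ simply is not in $\mathcal R^{k-1}_{\delta/10,5w,v}$. Re-choosing $s'\in S_{k-1}$ worsens the offset to $|s'-slope(R)|\le 2^{-k}$, and then the containment $EX(R)\subseteq EX(R')$ needs $5w/(2L')\ge 2^{-k}+w/(2L(R))$ while the class forces $L'>\tfrac54\cdot 2^kw$; the admissible window for $L'$ degenerates as $L(R)\downarrow 2^{k-1}w$ with $slope(R)$ midway between adjacent points of $S_{k-1}$, and even where it survives the ratio $|R'|/|R|=5L'/L(R)$ exceeds $10$, so the density threshold $\delta/10$ is not met. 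The obstruction is real, not just an arithmetic slip: a single $EX(R')$ of the forced width, centered on the forced grid, cannot in general contain $EX(R)$, and without full containment you cannot pass from $|V(R)|\ge\delta|R|$ to any lower bound on $|V(R')|$, since all of the mass of $V(R)$ could lie over the part of $EX(R)$ that $EX(R')$ misses.

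The paper avoids this with a pigeonhole over \emph{two} rectangles: it takes the two adjacent slopes $s_1,s_2\in S_k$ bracketing $slope(R)$, each within $|EX(R)|$ of it, and two corresponding enlarged rectangles containing $R$. Then $EX(R)\subseteq EX(R_1)\cup EX(R_2)$, hence $V(R)\subseteq V(R_1)\cup V(R_2)$, and $\delta|R|\le |V(R_1)|+|V(R_2)|$ forces $|V(R_j)|\ge\frac{\delta}{10}|R_j|$ for at least one $j$; the average over $R$ is then controlled by $25$ times the average over that $R_j$. This two-slope covering of $EX(R)$, followed by pigeonholing the density between the two candidates, is exactly the idea your single-rectangle argument is missing; I would rework the proof along those lines.
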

\begin{proof}
Let $R \in \mathcal R  _{\delta , w, v}$ with $|EX(R)| \sim 2^{-k} $.  There are two slopes $s_1$ and $s_2$ in $S_k$ such that 
\beqa
|s_j - slope(R) | \leq |EX(R)|.
\eeqa
There are (at least) two corresponding rectangles $R_1$ and $R_2$ such that $slope(R_j) = s_j$ and such that 
$R \subseteq 5 R_j$.  Further, either $|V(R_1) |  \geq { {\delta} \over {10} } |R_1|$ or $|V(R_2) |  \geq { {\delta} \over {10}} |R_2|$.  Say it holds for $R_1$.  Then 
\beqa
{1 \over { |R| } } \int _{R} |f| \leq { {25}  \over { |R_1| } } \int _{R_1} |f|,
\eeqa
and $R_1 \in R^{dis} _{  { { \delta} \over {10} }  , 5w, v} $.  This completes the proof.  
\end{proof}
Hence we may restrict our attention to the discrete model.  We will identify slopes with intervals.  That is, we will identify $s\in S_k$ with the dyadic interval centered at $s$.  So $S_0 = \{ [0,1] \}$, $S_1 = \{ [0,{1\over 2}), [ {1\over 2} , 1] \}$, 
$ S_2 = \{  [0,{1\over 4}), [ {1\over 4} , {1\over 2}],  [ {1\over 2} , {3\over 4}],  [ {3\over 4} , 1] \}$, etc.  With this identification, it is clear what we mean by $s\subseteq s'$ for $s \in S_k$ and $s' \in S_{k'}$ for some $k' <k$.  

We will further restrict our attention to a model in which we average over parallelograms that project vertically onto dyadic intervals.  The reduction to parallelograms is trivial.  Let $\mathcal D$ be the dyadic intervals, and let 
$\mathcal D '$ be the intervals in $\mathcal D$ shifted left by ${1\over 3}$.  It is not too difficult to check (use binary expansions) that if $J$ is an interval, then either there is a $K\in \mathcal D$ with $J\subseteq K$ and 
$|K| \leq 16 |J|$, or there is a $K\in \mathcal D'$ with $J\subseteq K$ and 
$|K| \leq 16 |J|$.  With this observation it is clear that we may control $M _{\mathcal R _{\delta , w, v} }$ with two dyadic models, with comparable values of the parameter $\delta$.

\section{Proof of main theorem} \label{mainproof}
We begin this section by rewriting the definition of the maximal operator under consideration, taking into account the reductions made in the previous section.  This will require some new notation.  Then we will state a covering lemma, and indicate how it yields Theorem \ref{main}.

\subsection{Notation}
Fix a small number $w$ (and for convenience, assume $w$ is an integer power of $2$).  
Let $u: [0,1] \rightarrow S_{\log {1\over w} } $, and let $v(x,y) = (1, u(x))$.  Now let $\mathcal D$ be the dyadic intervals contained in $[0,1]$.  Let $I\in \mathcal D$, and let 
$s \in S_{\log {  {|I| }  \over w} }$.  (Recall that parallelograms with length $|I|$ will only have slopes defined up to an error of ${ w \over { |I| } }$; this is why we take $s \in S_{\log {  {|I| }  \over w} }$.)  For the remainder of the paper, we will view $v$, $w$, and $\delta$ as being fixed.

Define the popularity of a slope $s$ in the interval $I$ to be 
\beqa
 Pop _I (s) = { 1 \over { |I| } }    | \{ x\in I \colon u(x) \subseteq s \} |.
\eeqa
(Again, recall that slopes are viewed as intervals, hence the notation $ u(x) \subseteq s$.)
Let 
\beqa
S(I) = \{ s\in  S_{\log {  {|I| }  \over w} } \colon Pop _I (s) \geq \delta \}.
\eeqa
This is the set of allowable slope for rectangles projecting to $I$.  
Given a parallelogram $R$, define $slope(R)$ to be the slope of the long side of $R$, and define $int(R)$ to be the projection of $R$ onto the $x$-axis.  We will let 
\beqa
\mathcal R = \{ \text{parallelograms  } R \colon  int(R) \in \mathcal D \text{  and  }  slope(R) \in S(I) \}.
\eeqa
Because of this, all intervals considered in the rest of the paper are assumed to be dyadic.
Recall that $M_{\mathcal R}$ is defined by 
\beqa
M_{\mathcal R} f (z) = \sup _{z\in R \in \mathcal R} {1 \over { |R| } } \int _{R} |f| 
\eeqa
for locally integrable $f$.  Our goal is to show $|| M_{\mathcal R} || _{L^p \rightarrow L^p } \lesssim {1 \over { \delta } }$.

\subsection{Statement of Covering Lemma}
We remark that there is nothing new about this covering lemma approach.  See, e.g., [LL2].
\begin{lemma} \label{covering}
Let $\widetilde{ \mathcal R } \subseteq \mathcal R$.  Let $q$ be an integer greater than or equal to $2$.  Then we may write $\widetilde{ \mathcal R }$ as the disjoint union of collections $ \mathcal R _1$ and $ \mathcal R _2 $ such that 
\beqa
\left| \bigcup _{R \in  \mathcal R _2 } R \right| \lesssim \sum _{R \in  \mathcal R _1}  |R|
\eeqa
and
\beqa \label{hardpart}
\int \left( \sum _{R\in \mathcal R _1 } \chi _ R \right)  ^ {q }     
 \leq c_q {1 \over { {\delta } ^ {q-1 } }  } \sum _{R \in  \mathcal R _1}  |R|.
\eeqa
\end{lemma}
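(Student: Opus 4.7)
The plan is a greedy stopping-time construction modeled on the Lacey-Li covering lemma in [LL2]. Order the rectangles of $\widetilde{\mathcal R}$ by decreasing length $|int(R)|$ and process them one at a time. Maintaining the currently selected sub-collection $\mathcal R_1^R$, at step $R$ I define the load
\begin{equation*}
\Lambda_q(R) = \int_R\Bigl(1+\sum_{R'\in\mathcal R_1^R}\chi_{R'}\Bigr)^{q-1},
\end{equation*}
and place $R\in\mathcal R_1$ precisely when $\Lambda_q(R) \le K\delta^{-(q-1)}|R|$ for a suitable $K=K(q)$; otherwise $R\in\mathcal R_2$. The stopping rule is set up so that property (\ref{hardpart}) is essentially built into the construction.

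For (\ref{hardpart}) I would expand
\begin{equation*}
\int\Bigl(\sum_{R\in\mathcal R_1}\chi_R\Bigr)^{q} = \sum_{R\in\mathcal R_1}\int_R\Bigl(\sum_{R'\in\mathcal R_1}\chi_{R'}\Bigr)^{q-1}
\end{equation*}
and, on each $R$, split the inner sum into earlier-selected (larger) and later-selected (smaller) rectangles. The earlier piece is bounded directly by the stopping rule, contributing the desired $K\delta^{-(q-1)}\sum_{\mathcal R_1}|R|$. The later piece is re-indexed so that $R$ is the largest participant in each resulting $q$-fold intersection; an induction on $q$ together with a Minkowski split of earlier/later contributions lets the symmetric late piece be reabsorbed, producing a constant $c_q$ depending only on $q$.

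For property (a), if $R\in\mathcal R_2$ then Chebyshev on $\Lambda_q(R) > K\delta^{-(q-1)}|R|$ forces a positive fraction of $R$ into the level set $\{\sum_{R'\in\mathcal R_1^R}\chi_{R'} \ge c/\delta\}$. The popularity hypothesis enters here in the form of the bound $|S(I)| \le 1/\delta$ (disjoint popular slopes in $S_{\log(|I|/w)}$ have popularities summing to at most $1$), so a point overlapped by $\gtrsim 1/\delta$ previously selected parallelograms must in fact lie inside a bounded enlargement of one of them. Summing over $R\in\mathcal R_2$ then yields $|\bigcup_{\mathcal R_2}R| \lesssim |\bigcup_{\mathcal R_1}R| \le \sum_{\mathcal R_1}|R|$.

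The main obstacle I anticipate is the handling of the late-rectangle piece in the proof of (\ref{hardpart}). The stopping rule only constrains each $R$ against previously selected (larger) rectangles, so the smaller rectangles added after $R$ must be controlled by a delicate balance of Minkowski splitting with the permutation symmetry of $q$-fold intersections. Closing the induction so that $c_q$ depends only on $q$ and not on $\delta$ is the technical heart, and it is here that both the $\delta$-popularity of the admissible slopes and the dyadic structure on the projections $int(R)\in\mathcal D$ must be used in an essential way; otherwise the inductive constants blow up as $q$ grows.
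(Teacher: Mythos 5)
Your selection rule is genuinely different from the paper's and, as set up, the argument does not close. The paper selects greedily by decreasing length and rejects a rectangle $R'$ precisely when $R'\subseteq\{\sum_{R\in\mathcal R_1}\chi_{5R}\geq 1\}$, so that $|\bigcup_{R\in\mathcal R_2}R|\lesssim\sum_{R\in\mathcal R_1}|R|$ is immediate by Chebyshev and all the work goes into (\ref{hardpart}). You instead use a C\'ordoba--Fefferman load-based stopping time, which makes (\ref{hardpart}) nearly automatic (in each $q$-fold intersection take as base the \emph{last-selected} rectangle of the tuple, so every other factor is an earlier one and the stopping condition applies; the ``late rectangle'' obstacle you flag dissolves). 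But this pushes all the difficulty into the first property, and your argument for it has two gaps. First, Chebyshev applied to $\Lambda_q(R)>K\delta^{-(q-1)}|R|$ does not place a positive fraction of $R$ inside $\{\sum\chi_{R'}\geq c/\delta\}$; it only says that set carries most of the $(q-1)$-st moment. To convert that into a lower bound on its measure you need an upper bound on the overlap on that set, i.e.\ essentially the estimate being proved (and a pointwise bound $\sum\chi_{R'}\lesssim 1/\delta$ is false in general).

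Second, the assertion that a point covered by $\gtrsim 1/\delta$ previously selected parallelograms must lie in a bounded enlargement of one of them is not justified by $\#S(I)\leq 1/\delta$. That bound is per scale: a point $x$ lies in roughly $\log(1/w)$ dyadic intervals $I$, each of which may contribute up to $1/\delta$ admissible slopes, and rectangles at different scales with nested slopes need not contain one another up to bounded enlargement unless the selection forces it --- which is exactly what the paper's $5R$ rejection rule is for. Indeed the paper does \emph{not} prove an $L^\infty$ bound on the overlap of $\mathcal R_1$: it proves only $f(x,y)\leq\delta^{-1}h(x)$, where $h$ is the balayage of a Carleson sequence built from the hierarchy $T(I)$ of slopes not already used by ancestors, and then invokes dyadic BMO and John--Nirenberg to get $\|h\|_{q-1}\leq c_q$. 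That reduction of the two-variable overlap to a one-variable BMO function, via an injection from the realized (interval, slope) pairs into the $T(I)$ hierarchy, is the actual content of the lemma and is where the one-variable hypothesis on the vector field enters; your sketch uses that hypothesis only through $\#S(I)\leq 1/\delta$, which by itself is not enough.
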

To see that the lemma implies Theorem \ref{main}, let $f\in L^p$, let  ${1\over p} + {1\over q} =  1$, and let 
\beqa
E_{\lambda} = \{ M_{\mathcal R}f > \lambda \}.
\eeqa
Then write $E_{\lambda} = \bigcup _{R \in  \widetilde{ \mathcal R } } R $ for some 
$ \widetilde{ \mathcal R } \subseteq \mathcal R $, where 
\beqa
{1 \over { |R| } } \int _{R} |f| > \lambda
\eeqa
for $R \in  \widetilde{ \mathcal R }$ .  
We have a decomposition  $  \widetilde{ \mathcal R } = \mathcal R _1 \sqcup \mathcal R _2 $ as in the statement of the lemma, which gives us 
\beqa
\sum _{R \in  \mathcal R _1}  |R| & \leq & \sum _{R \in  \mathcal R _1}   {1 \over { \lambda } } \int _{R} |f|  \\ \nonumber
& \leq &{1 \over { \lambda } } || f || _p 
     \left( \int \left( \sum _{R\in  \mathcal R _1 } \chi _ R \right)  ^ {q } \right ) ^{ {1\over q} } \\ \nonumber
& \lesssim & {1 \over { \lambda } } || f || _p   
     {1 \over { {\delta } ^ {1-  {1\over q}  } }  }    \left( \sum _{R \in  \mathcal R _1}  |R| \right) ^{ {1\over q} }.
\eeqa
This implies
\beqa 
 \sum _{R \in  \mathcal R _1}  |R|  \lesssim { 1\over {\delta}  } {1 \over { \lambda ^ p} } ||f|| _p ^ p
\eeqa
This quantity obviously dominates $\left| \bigcup _{R \in  \mathcal R _1 } R \right|$, and it dominates 
$\left| \bigcup _{R \in  \mathcal R _2 } R \right| $ by the covering lemma.  Hence
\beqa
| E_ {\lambda} | \lesssim { 1\over {\delta}  } {1 \over { \lambda ^ p} } ||f|| _p ^ p .
\eeqa
This is the weak type $(p,p)$ estimate for $M_{\mathcal R}$.  Since we can prove the covering lemma for arbitrarily large integers $q$, we can prove weak type $(p,p)$ for any $p>1$.  

\subsection{Proof of Covering Lemma}

\subsubsection{Selection Procedure}

We construct the collections  $ \mathcal R _1$ and $ \mathcal R _2 $ as follows.  Initialize
\beqa
 \widetilde{ \mathcal R } & : = & \widetilde{ \mathcal R }  \\ \nonumber
 \mathcal R _1 & : = & \emptyset  \\ \nonumber 
 \mathcal R _2 & : = &  \emptyset .
\eeqa
While $ \widetilde{ \mathcal R }  \neq \emptyset$:  choose $R \in  \widetilde{ \mathcal R } $ of maximal length, and update
\beqa
 \widetilde{ \mathcal R } & : = & \widetilde{ \mathcal R }  \setminus  \{ R \}  \\ \nonumber
 \mathcal R _1 & : = &   \mathcal R _1  \cup \{ R \}  \\ \nonumber
 \mathcal R _2 & : = &  \mathcal R _2 ;
\eeqa
if there is an $R' \in \widetilde{ \mathcal R }$ such that $R' \subseteq \{ \sum _{R' \in \mathcal R _1 } \chi _{5R'}   \geq 1 \}$, update
\beqa
\widetilde{ \mathcal R } & : = & \widetilde{ \mathcal R }  \setminus  \{ R' \}  \\ \nonumber
 \mathcal R _1 & : = &   \mathcal R _1  \\ \nonumber
 \mathcal R _2 & : = &  \mathcal R _2 \cup \{ R' \} .
\eeqa
Here, of course, by $5R'$ we mean the parallelogram with the same center and side lengths inflated by a factor of $5$.
We make one important observation about the parallelograms in  $\mathcal R _1 $.  If $R,R' \in  \mathcal R _1 $ intersect, then they have different slopes.  More precisely, if $L(R) \leq L(R')$, then $slope(R) \not\supseteq slope(R')$.  For if it did, then 
$R \subseteq 5R'$.  (We are using the fact that $W(R)=W(R') $.)  Hence $R$ was put into the collection $\mathcal R _2$.  

It is clear by construction and by Chebyshev that 
\beqa
\left| \bigcup _{R \in  \mathcal R _2 } R \right| \lesssim \sum _{R \in  \mathcal R _1}  |R|,
\eeqa
so it remains to prove the estimate ( \ref{hardpart} ).  Note that 
\beqa
\int \left( \sum _{R\in \mathcal R _1 } \chi _ R \right)  ^ {q }    
\lesssim \sum _{R\in  \mathcal R _1 } \int _R  
\left(  \sum _{R' \in  \mathcal R _1, \text{  } int(R') \subseteq int(R) } \chi _{R'}    \right) ^{q-1},
\eeqa
so if we define 
\beqa
f(x,y) =  \sum _{R' \in  \mathcal R _1, \text{  }int(R') \subseteq int(R) } \chi _{R'} (x,y),
\eeqa
it is enough to show 
\beqa \label{uniform}
\int _R f(x,y) ^{q-1} \lesssim { 1 \over {\delta ^ {q-1} } } |R|
\eeqa
for any $R \in \mathcal R _1 $.  

\subsubsection{Uniform Estimates on Rectangles}

Without loss of generality, we will assume $int(R) = [0,1]$.  To prove $( \ref{uniform} )$, we will introduce some auxilliary functions.  To do this, we need some more notation.  The important point of this section is that we can control the two-variable function $f$ with a function of one variable that is relatively well-behaved. 

For $I \in \mathcal D$, define $T(I)$ as follows:  Let 
\beqa 
T([0,1]) = S([0,1]) . 
\eeqa
Note that $T([0,1])$ is just the set of allowable slopes for the interval $[0,1]$.  (Recall that the allowable slopes for an interval are those that are at least $\delta$-popular.)  We will define $T(I)$ similarly, except that we will not include slopes that have been used by an ancestor of $I$.  More precisely, having defined 
$T(K)$ for $K\supsetneq  I$, define
\beqa
T(I)  =  \{ s \in S(I) \colon s\not\supseteq s' \text{  for any  } s' \in T(K), K\supsetneq I \} .
\eeqa
For $s \in T(I)$, let 
\beqa
 \mu _I ^s =|I| Pop _I (s)  = | \{ x\in I \colon u(x) \subseteq s \} |;
\eeqa
otherwise, let $\mu _I ^s = 0$.  
Now we define the  auxilliary functions:  let
\beqa
g(x) = \sum _I \chi _I (x) \# ( T(I) ).
\eeqa 
and
\beqa
h(x) = \sum _I \sum _{s\in T(I) } \chi _I (x) { {  \mu _I ^s } \over { |I| } } ,
\eeqa
Our strategy will be to control the function $f$ by the one-variable function $g$, and then to control $g$ by the function $h$, which we will show to be in ${\mathbf{BMO}} $.  
\begin{lemma} \label{control}
With $f$ and $h$ defined above, we have $f(x,y) \leq {1 \over {\delta } } h(x)$ for every $y$.  
\end{lemma}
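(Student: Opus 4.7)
The plan is to prove the chain $f(x,y) \leq g(x) \leq \tfrac{1}{\delta}\, h(x)$. The second inequality is immediate from the definitions: every $s \in T(I)$ lies in $S(I)$, so $\mu_I^s/|I| = Pop_I(s) \geq \delta$, and summing over $s \in T(I)$ and then over dyadic $I \ni x$ gives $h(x) \geq \delta\, g(x)$. All of the content is therefore in the first inequality, namely that the number of $R' \in \mathcal R_1$ through $(x,y)$ with $int(R') \subseteq int(R)$ is at most $\sum_{I \ni x} \#T(I)$.

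To prove this I would build an injection $\Phi$ from $\mathcal R_1(x,y) := \{R' \in \mathcal R_1 : (x,y) \in R'\}$ into the disjoint union $\bigsqcup_{I \ni x} T(I)$. Given $R' \in \mathcal R_1(x,y)$ with $int(R') = I$ and $slope(R') = s$, set $\Phi(R') = (I, s)$ whenever $s \in T(I)$; otherwise, the definition of $T(I)$ supplies some $K \supsetneq I$ and $s' \in T(K)$ with $s \supseteq s'$, and I set $\Phi(R') = (K, s')$ for any such pair (fixing one choice). In either case the target pair lies in $\{K\} \times T(K)$ with $K \ni x$ and satisfies $slope(R') \supseteq s'$.

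The main obstacle is verifying that $\Phi$ is injective. Suppose $R'_1 \neq R'_2$ both map to the same $(K, s')$. Then $slope(R'_1)$ and $slope(R'_2)$ are dyadic slope intervals both containing $s'$, hence nested; WLOG $slope(R'_1) \supseteq slope(R'_2)$. Since the length of a slope interval equals $w/L(R)$, this forces $L(R'_1) \leq L(R'_2)$. Now the selection-procedure observation already noted in the paper --- intersecting members of $\mathcal R_1$ with $L(R) \leq L(R')$ cannot satisfy $slope(R) \supseteq slope(R')$, since otherwise $R \subseteq 5R'$ --- rules out strict containment, so $slope(R'_1) = slope(R'_2)$. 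Equal slope intervals have equal scale, hence $int(R'_1) = int(R'_2)$ (two dyadic intervals of the same size both containing $x$); so $R'_1$ and $R'_2$ share $(int, slope)$ and, containing the common point $(x,y)$, are vertical translates separated by less than $w$, whence the same $5R$-inclusion places all but one of them into $\mathcal R_2$, contradicting $R'_1, R'_2 \in \mathcal R_1$. The injection then yields $\#\mathcal R_1(x,y) \leq g(x)$, closing the argument.
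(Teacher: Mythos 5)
Your proof is correct and takes essentially the same approach as the paper's: the reduction $g(x) \leq \delta^{-1}h(x)$ is identical, and your injection $\Phi$ is the paper's map $\alpha$ from the set of realized (interval, slope) pairs into $\bigsqcup_{I \ni x} T(I)$, with injectivity resting on the same $5R$-containment observation from the selection procedure. The only cosmetic difference is that you inject directly from the set of rectangles through $(x,y)$ rather than first collapsing to (interval, slope) pairs, which absorbs the paper's Claim A into the equal-slope case of your injectivity check.
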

\begin{lemma} \label{bmo}
With $h$ defined immediately above, $h \in {\mathbf{BMO}} _{dyadic} ([0,1]).$
\end{lemma}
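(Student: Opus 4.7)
The plan is to verify the dyadic BMO condition in its mean-oscillation form: for every dyadic $J\subseteq[0,1]$ I will exhibit a constant $c_J$ with ${1\over{|J|}}\int_J |h - c_J|\, dx \lesssim 1$, which is equivalent to $h \in \mathbf{BMO}_{dyadic}$ up to a factor of two.

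Fix such a $J$.  On $J$ the function $h$ splits naturally into the contribution from ancestors of $J$ and the contribution from proper descendants:
\begin{equation*}
h(x) = A_J + B_J(x), \qquad A_J := \sum_{I \supseteq J}\sum_{s \in T(I)} {{\mu_I^s}\over{|I|}}, \qquad B_J(x) := \sum_{I \subsetneq J,\, I \ni x}\sum_{s \in T(I)} {{\mu_I^s}\over{|I|}}.
\end{equation*}
The ancestor piece $A_J$ is constant on $J$, so I take $c_J := A_J$; since $B_J \geq 0$, the BMO estimate reduces to showing $\int_J B_J \leq C\,|J|$.  Writing $\mu_I^s = \int_I \chi_{\{u(y)\subseteq s\}}\, dy$ and interchanging the sums with the integral reduces the task to bounding $\int_J N_J(x)\, dx$, where
\begin{equation*}
N_J(x) := \#\{(I,s) \colon I \subsetneq J,\, x \in I,\, s \in T(I),\, u(x) \subseteq s\}.
\end{equation*}

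The main obstacle, and really the whole content of the lemma, is the combinatorial claim $N_J(x) \leq 1$ for every $x$.  This is precisely where the "no-ancestor-already-chose-me" clause in the definition of $T(I)$ earns its keep.  For fixed $x$ and fixed $I \ni x$, the only slope $s$ with $u(x) \subseteq s$ is the unique dyadic slope interval $s_I$ of length $w/|I|$ containing $u(x)$.  If two nested intervals $I_1 \subsetneq I_2$ both containing $x$ satisfied $s_{I_i} \in T(I_i)$, then $s_{I_1}$ and $s_{I_2}$ both contain $u(x)$ and $|s_{I_1}| > |s_{I_2}|$, forcing $s_{I_1} \supsetneq s_{I_2}$; but then with $K = I_2 \supsetneq I_1$ we have $s_{I_1} \supseteq s_{I_2} \in T(K)$, directly violating the condition required for $s_{I_1} \in T(I_1)$.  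Hence $N_J \leq 1$, so $\int_J B_J \leq |J|$ and ${1\over{|J|}}\int_J |h - A_J|\, dx \leq 1$, proving the lemma.  Everything apart from the combinatorial claim is bookkeeping; the structural selection rule defining $T(I)$ does all the work.
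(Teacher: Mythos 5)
Your proof is correct and follows essentially the same route as the paper: you subtract the constant ancestor contribution (the paper's $b_I = \sum_{K \supseteq I} \mu_K / |K|$ is exactly your $A_J$) and bound the remaining descendant sum by $|J|$, which is precisely the paper's Carleson-sequence condition for $\mu_I = \sum_{s \in T(I)} \mu_I^s$. Your verification that $N_J(x) \leq 1$ is a careful unpacking of the paper's one-line justification that ``no $x$-coordinate can choose more than one slope,'' using the nesting of dyadic slope intervals and the exclusion clause in the definition of $T(I)$.
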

With these two lemmas, we can easily finish the proof of Lemma \ref{covering}.
By the John-Nirenberg theorem, and the fact that $\int _{[0,1] } h (x) dx =1$, we have $||h||_r \leq c_r$ for any 
$1\leq r < \infty$.  Hence
\beqa
\int _R \left( f(x,y) \right) ^{q-1} dxdy  
& \leq &  { 1 \over {\delta ^{q-1} } } w \int  _ { [0,1]} h(x) ^{q-1} dx  \\ \nonumber
& \leq & c_q  { 1 \over {\delta ^{q-1} } } |R|.
\eeqa
This completes the proof of the covering lemma.  We turn our attention to the proofs of Lemmas \ref{control} and 
\ref{bmo}.  Lemma \ref{bmo} is simple, and not really new, so we prove it first.

\begin{proof} [Proof of Lemma \ref{bmo} ]
Define $\mu _I = \sum _{s \in T(I) } \mu _I ^s $.  The sequence $\mu _I$ is a Carleson sequence; i.e., for any interval $I$, 
\beqa
\sum  _{J \subseteq I }   \mu _J \leq C |I|.
\eeqa
(In fact, we may take $C=1$ here.)  This holds because no $x$-coordinate can choose more than one slope.
Note that 
\beqa
h(x) = \sum _I \sum _{s\in T(I) } \chi _I (x) { {  \mu _I ^s } \over { |I| } } = \sum _I  \chi _I (x)  { {  \mu _I  } \over { |I| } }.
\eeqa
A function of this form is called a \it{balayage} \rm of the Carleson sequence $\mu _I$, and such functions are easily shown to be in 
 ${\mathbf{BMO}} _{dyadic}$.
To do this, it is enough to find, for each $I$, a number $b_I$ such that 
\beqa
{ 1 \over {|I|} } \int _I |h(x) - b_I | dx \leq C.
\eeqa
Let 
\beqa
b_I = \sum _{K \supseteq I }  { {  \mu _K  } \over { |K| } },
\eeqa
and compute, using the fact that $\mu _I$ is a Carleson sequence:
\beqa
{ 1 \over {|I|} } \int _I |h(x) - b_I | dx 
&=& { 1 \over {|I|} } \int _I  \sum _{J \subsetneq I } \chi _J (x) { {  \mu _J  } \over { |J| } } dx \\ \nonumber
&=& { 1 \over {|I|} } \sum _{J \subsetneq I } \mu _J  \\ \nonumber
& \leq & C.
\eeqa
\end{proof}
The proof of Lemma \ref{control} is a bit more involved.
\begin{proof}[Proof of Lemma \ref{control} ]
First note that if $s \in T(I)$, then ${ {  \mu _I ^s } \over { |I| } } \geq \delta $, so 
\beqa
g(x) \leq {1\over {\delta} } \sum _I \sum _{s\in T(I) } \chi _I (x) { {  \mu _I ^s } \over { |I| } }  = {1\over {\delta} } h(x).
\eeqa
it remains to show $f(x,y) \leq g(x)$ for all $y$.  


Let 
\beqa
C(x,y) = \{ R \in \mathcal R _1  \colon (x,y) \in R\}, 
\eeqa
and let
\beqa
r_I(x,y) = \{ s \in S_{\log { { |I| } \over w} }  \colon \exists R \in C(x,y) \text{  with  }  int(R) = I \text{  and  } slope(R) = s \};
\eeqa
note that it may be empty for some $I$.  
Now we define two collections of pairs of intervals and slopes:
\beqa
P & = & \{ (I,s) \colon  s\in r_I (x,y) \} \\ \nonumber
Q & = & \{ (I,s) \colon  s\in T(I)   \} .
\eeqa
Two facts about the sets $P$ and $Q$ will finish the proof of Lemma \ref{control}.
\begin{claim}
We have

A.  $f(x,y) = \# (P)$ and $g(x) = \# (Q) $.

B.  $\# (P) \leq \# (Q)$.
\end{claim}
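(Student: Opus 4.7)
The plan is to handle Part A by unwinding the definitions, and Part B by setting up a natural tree structure on slopes and running an antichain argument. The key input for both parts is the observation displayed just before the Claim: any two intersecting rectangles in $\mathcal R_1$ carry different slopes, and if $L(R) \leq L(R')$ then $slope(R) \not\supseteq slope(R')$.

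For Part A, the identity $g(x) = \#(Q)$ is essentially immediate once $Q$ is understood as the pairs $(I,s)$ with $x \in I$ and $s \in T(I)$ (the factor $\chi_I(x)$ in the definition of $g$ restricts the sum to such $I$). For $f(x,y) = \#(P)$, note that $f(x,y) = \#(C(x,y))$ by definition, so it suffices to check that the map $R \mapsto (int(R), slope(R))$ from $C(x,y)$ to $P$ is a bijection; surjectivity is built into the definitions of $r_I(x,y)$ and $P$, and injectivity holds because any two distinct rectangles in $C(x,y)$ intersect at $(x,y)$ and so carry different slopes by the observation.

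For Part B, organize the slopes along the dyadic chain through $x$ into a tree: declare $(I,s)$ to be a tree-ancestor of $(K,s')$ when $K \supsetneq I$ and $s \supseteq s'$. (Since $|K| > |I|$ makes the slopes at level $K$ strictly finer than at level $I$, this is the natural refinement order, and the ancestors of any node form a chain.) The collection $P$ is then an antichain in this tree: distinct slopes at the same level are disjoint intervals, and for two elements at different levels the containment $s_2 \supseteq s_1$ required to make the coarser one an ancestor is exactly what the observation forbids (applied to the shorter rectangle with slope $s_2$ and the longer rectangle with slope $s_1$). Meanwhile the recursive definition of $T$ guarantees that any popular slope $(I,s)$ with $s \in S(I) \setminus T(I)$ admits a tree-descendant $(K, s^*)$ with $K \supsetneq I$ and $s^* \in T(K)$.

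To conclude, define $\phi \colon P \to Q$ by $\phi(I,s) = (I,s)$ if $s \in T(I)$, and otherwise $\phi(I,s) = (K, s^*)$ for some chosen tree-descendant in $T$. If $\phi(a_1) = \phi(a_2) = b$, then $b$ is equal to or a tree-descendant of both $a_1$ and $a_2$, so $a_1$ and $a_2$ both lie among the tree-ancestors of $b$ and are therefore comparable in the tree; the antichain property of $P$ then forces $a_1 = a_2$. The main obstacle is exactly recognizing this hidden antichain structure of $P$, which is where the ``intersection forces non-containment'' observation is essential; once that is in hand, injectivity of $\phi$ yields $\#(P) \leq \#(Q)$.
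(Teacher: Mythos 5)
Your proof is correct and follows essentially the same route as the paper: the same injection $P\to Q$ built from the recursive definition of $T(I)$ (your ``tree-descendant'' step is the paper's first subclaim), with injectivity resting on the same two facts --- dyadic nesting of slopes along the chain through $x$, and the observation that an intersecting shorter rectangle in $\mathcal R_1$ cannot have slope containing that of a longer one (the paper's Cases A and B, which you repackage as ``$P$ is an antichain whose elements comparable to a common $b$ must coincide''). The antichain/tree language is only a cosmetic reorganization of the paper's case analysis, so no further comment is needed.
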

\begin{proof} [ Proof of Part A]
If $R \in C(x,y)$, then there is some $I$ with $int(R) = I$ and $s \in r_I (x,y) $ with $slope (R) =s$.  On the other hand, given 
$(I,s) \in P$, there can be at most one rectangle $R$ in $C(x,y)$ with $int(R) = I$ and 
 $slope (R) =s$.  For if there were two, then the shorter one would not be in the collection $\mathcal R _1$, by the construction of $\mathcal R_1$ and $\mathcal R_2$ .  The analogous fact for $g$ and $Q$ follows from the definitions of $g$, $Q$, and the collections $T(I)$.
\end{proof}
\begin{proof} [ Proof of Part B]
Of course it is enough to find an injection from $P$ to $Q$.  
\begin{subclaim}
Let $(I,s) \in P$.  Then there is $(J, t ) \in Q$ with $I\subseteq J$ and $s \supseteq t$.
\end{subclaim}
\begin{proof}
Note that $Pop _I (s) \geq \delta $, so if there is no $(J, t ) \in Q$ with $I \subsetneq J $ and $s \supseteq t$, then 
$(I,s) \in Q$ by definition of $T(I)$.  
\end{proof} 
Now let $\alpha \colon P \rightarrow Q$ send $(I,s)$ to one of the elements $(J,t) \in Q$ provided by the subclaim.  (Such a choice may not be unique, but this is unimportant.)  The important point is this:
\begin{subclaim}
Suppose $(I_k,s_k) \in P$ and $(J_k ,t_k ) \in Q$ for $k=1,2$.  Suppose $I_k \subseteq J_k$ and $s_k \supseteq t_k$ for $k=1,2$.  If $(I_1,s_1) \neq (I_2,s_2)$, then $(J_1,t_1) \neq (J_2,t_2)$.
\end{subclaim}
Note that this subclaim guarantees that the function $\alpha$ above is one-to-one.
\begin{proof}
Let $(I_k,s_k)$ and $(J_k ,t_k ) $ be as in the statement of the subclaim with $(I_1,s_1) \neq (I_2,s_2)$.
We have both $x\in I_1$ and $x\in I_2$, so without loss of generality, we will assume $I_1 \subseteq I_2$.  
Since $s_1, s_2$ are dyadic intervals, we have that if $|s_1| \geq  |s_2|$, then either $s_1 \supseteq s_2$ or 
$s_1 \cap s_2 = \emptyset$.  We consider the following two cases:

CASE A:  $ s_1 \not\supseteq s_2$.  By the preceding observation, we have $s_1 \cap s_2 = \emptyset$.  Since 
$t_1 \subseteq s_1$ and $t_2 \subseteq s_2$, we have $t_1 \cap t_2 = \emptyset$, and hence $t_1 \neq t_2$.

CASE B:   $ s_1 \supseteq s_2$.  In fact, this case is not possible.  Suppose it were.  Then we would have $R_k$, 
$k = 1,2,$ with $int(R_k) = I_k$, $slope(R_k) = s_k$, and $R_1 \cap R_2 \neq \emptyset $.  But then 
$R_1 \subseteq 5R_2$, so $R_1 \not\in \mathcal R_1$.  

\end{proof} 

\end{proof} 

\end{proof}  

\end{document}